\newtheorem{theorem}{Theorem}[section]
\newtheorem*{theorem*}{Theorem}
\newtheorem{lemma}{Lemma}[section]
\theoremstyle{remark}
\newcommand{\CC}{\mathds{C}}
\newcommand{\abrep}{\bigg{(}}
\newcommand{\fechap}{\bigg{)}}
\newcommand{\NN}{\mathds{N}}
\newcommand{\HH}{\mathds{H}}
\newcommand{\ind}{\mathds{1}}
\DeclareMathOperator{\lcm}{lcm}
\begin{document}
\title{A note on multiplicative functions resembling the M\"obius function}
\author{Marco Aymone}
\begin{abstract}
We provide examples of multiplicative functions $f$ supported on the square free integers, such that on primes $f(p)=\pm1$ and such that
$M_f(x):=\sum_{n\leq x} f(n)=o(\sqrt{x})$. Further, by assuming the Riemann hypothesis (RH) we can go beyond $\sqrt{x}$-cancellation.
\end{abstract}

\maketitle

\section{Introduction.}
We say that $f:\NN\to\CC$ \textit{resembles} the M\"obius function $\mu$ if $f$ is multiplicative, supported on the squarefree integers, \textit{i.e.}, $f(n)=0$ whenever $n$ is divisible by some perfect square, and $f(p)\in\{+1,-1\}$ for each prime $p$.
The set of the squarefree integers $\mathcal{S}$ is an abelian group under the operation $n \circ m := \frac{\lcm(n,m)}{\gcd(n,m)}$. Further,  all the complex-valued group characters of $(\mathcal{S},\circ)$ are the real valued multiplicative functions $f$ that have support on the squarefree integers, and on primes $f(p)\in\{-1,1\}$, see \cite{Hilberdinkthegroupsquarefree}.

Let $\mathcal{P}$ be the set of prime numbers. In \cite{Maierresembling} the authors called such $f$ as a multiplicative function that resembles the M\"obius $\mu$, and their main result provide a condition on the values $(f(p))_{p\in\mathcal{P}}$ for which the partial sums $M_f(x):=\sum_{n\leq x}f(n)$ are $O(x^{1/2+o(1)})$.

If the values $(f(p))_{p\in\mathcal{P}}$ are given by independent random variables that have equal probability to be either $\pm 1$, then with probability one $M_f(x)=o(x^{1/2+\epsilon})$ for all $\epsilon>0$. Further, these partial sums are not (with probability one) $o(x^{1/2-\epsilon})$, see \cite{wintner} for these and other interesting results. Further, up to this date, the best upper bound for $M_f(x)$ can be found in \cite{tenenbaum2013} and the best $\Omega$-result can be found in \cite{harpergaussian}.

The solution of the Erd\H{o}s discrepancy problem (see \cite{taodiscrepancy}) implies that a completely multiplicative function $f:\NN\to\{-1,1\}$ has unbounded partial sums. However, a completely multiplicative function $f:\NN\to\{-1,0,1\}$ may have bounded partial sums, for instance, a real non-principal Dirichlet character $\chi$. Also, a multiplicative function $f:\NN\to\{-1,1\}$ may have bounded partial sums, see \cite{klurmancorrelation} for a complete classification of such $f$, and see \cite{klurmanchudakov} for the proof of Chudakov's conjecture. In the case $f:\NN\to\{-1,1\}$ is completely multiplicative there are known examples for which $M_f(x)$ is $O(\log x)$, see \cite{Borweindicrepancy}.

Here we address the following question:\\
\textit{For which values of $\alpha>0$ there exists a multiplicative function $f$ resembling the M\"obius function $\mu$ such that its partial sums $M_f(x)$ are $o(x^\alpha)$?}

\begin{theorem}\label{theorema 1} There exists a multiplicative function $f$ resembling $\mu$ and such that $M_f(x)=o(\sqrt{x})$. Further, if we assume RH, there exists $f$ such that
$M_f(x)=o( x^{2/5+\epsilon})$, for any $\epsilon>0$.
\end{theorem}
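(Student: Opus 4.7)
The plan is to choose $f(p)$ so that its Dirichlet series factors cleanly through $\zeta(2s)$. I would fix a real primitive non-principal Dirichlet character $\chi$ modulo some $q$ (for instance $\chi_{-4}$), set $f(p)=\chi(p)$ for the primes $p\nmid q$, and assign $f(p)=+1$ for the finitely many primes $p\mid q$. Then $f$ resembles $\mu$ by construction. Using $f(p)^{2}=1$ at every prime, the Euler product rewrites as
\[
F(s):=\sum_{n\geq 1}\frac{f(n)}{n^{s}}
=\prod_{p}\frac{1-p^{-2s}}{1-f(p)p^{-s}}
=\frac{L(s,\chi)\,h(s)}{\zeta(2s)},
\qquad h(s):=\prod_{p\mid q}\bigl(1-f(p)p^{-s}\bigr)^{-1}.
\]
Since $L(s,\chi)$ is entire and $h(s)$ is a finite Euler product, the numerator is entire, while $\zeta(2s)$ has a simple pole at $s=1/2$. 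Hence $F(s)$ extends analytically across $\Re s=1/2$ and vanishes there.

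For the unconditional bound $M_{f}(x)=o(\sqrt{x})$, I would apply the truncated Perron formula and shift the contour from $\Re s=1+1/\log x$ to a curve inside the classical zero-free region of $\zeta(2s)$, namely $\Re s=1/2-c_{0}/\log(|t|+2)$. Because $F$ is analytic in this region and $F(1/2)=0$ kills the only candidate residue, one is left with a standard Landau--de la Vall\'ee Poussin estimation using $|1/\zeta(2s)|\ll(\log(|t|+2))^{O(1)}$ together with the polynomial growth of $L(s,\chi)$. This yields $M_{f}(x)=O\!\bigl(\sqrt{x}/(\log x)^{A}\bigr)$ for every $A>0$.

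For the conditional bound, RH makes $\zeta(2s)$ zero-free throughout $\Re s>1/4$, with $|1/\zeta(2s)|\ll |t|^{\epsilon}$ on any line $\Re s=1/4+\delta$. I would shift the Perron contour to that line and estimate the vertical integral by Cauchy--Schwarz:
\[
\int_{-T}^{T}\frac{|F(1/4+\delta+it)|}{1+|t|}\,dt
\leq
\left(\int_{-T}^{T}\frac{|L(1/4+\delta+it,\chi)|^{2}}{1+|t|}\,dt\right)^{\!1/2}
\!\left(\int_{-T}^{T}\frac{|1/\zeta(1/2+2\delta+2it)|^{2}}{1+|t|}\,dt\right)^{\!1/2}\!.
\]
Via the functional equation of $L(s,\chi)$ and the classical second moment $\int_{0}^{T}|L(3/4-it,\bar\chi)|^{2}\,dt\ll T$, one gets $\int_{0}^{T}|L(1/4+\delta+it,\chi)|^{2}\,dt\ll T^{3/2+\epsilon}$, making the first factor $\ll T^{1/4+\epsilon}$, while RH bounds the second factor by $T^{\epsilon}$. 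Together with the truncation error $O(x^{1+\epsilon}/T)$ and the horizontal pieces (which are of the same order), this gives $M_{f}(x)\ll x^{1+\epsilon}/T+x^{1/4+\delta}T^{1/4+\epsilon}$, optimized at $T=x^{3/5}$ to produce $M_{f}(x)\ll x^{2/5+\epsilon}$.

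The main obstacle is isolating the exponent $2/5$ under RH alone. Using the pointwise convexity bound $|L(1/4+it,\chi)|\ll |t|^{3/8+\epsilon}$ only delivers $x^{5/11+\epsilon}$; replacing this pointwise estimate by the second-moment bound paired with the RH control of $1/\zeta(2s)$ through Cauchy--Schwarz is the essential technical step, and the rest of the argument (justifying the contour shift, bounding the horizontal segments, choosing the modulus $q$) is routine.
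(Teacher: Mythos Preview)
Your construction of $f$ matches the paper's exactly, and your conditional argument under RH is correct though genuinely different from the paper's. You run Perron with a contour shift to $\Re s=1/4+\delta$, pairing the second moment of $L(s,\chi)$ with the RH bound $|1/\zeta(2s)|\ll|t|^{\epsilon}$ via Cauchy--Schwarz; the paper stays entirely real-variable. There the identity $F(s)=G(s)/\zeta(2s)$ is read arithmetically as $f=g\ast h$, where $g$ is the completely multiplicative extension of $\chi$ (so $M_g(x)\ll(\log x)^{\omega(q)}$ by an elementary Tauberian lemma) and $h(n)=\ind_{\NN}(\sqrt{n})\mu(\sqrt{n})$ (so $M_h(x)=M_\mu(\sqrt{x})\ll x^{1/4+\epsilon}$ under RH); the Dirichlet hyperbola method with $U=x^{4/5}$, $V=x^{1/5}$ then delivers $x^{2/5+\epsilon}$. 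Both routes hit the same exponent; yours is more analytic, the paper's more elementary and self-contained.

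Your unconditional argument, however, has a genuine gap. The Perron truncation error for a sequence with $|f(n)|\le 1$ is of order $x(\log x)/T$, so to make it $o(\sqrt{x})$ you are forced to take $T\ge x^{1/2+o(1)}$, hence $\log T\ge\tfrac12\log x$. But then the shift into the classical zero-free region $\sigma_0(t)=1/2-c_0/\log(|t|+2)$ saves, at height $|t|\sim T$, only a factor $x^{-c_0/\log T}=\exp(-c_0\log x/\log T)\ge e^{-2c_0}$, a bounded constant, while the vertical integral still carries powers of $\log T$ from the mean-value input. The net bound is $\gg x^{1/2}$, not $o(\sqrt{x})$; the observation $F(1/2)=0$ only removes a would-be residue and does not touch this balance. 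The paper circumvents the problem by avoiding Perron altogether: from $f=g\ast h$ with $M_g(x)\ll(\log x)^{O(1)}$ and the unconditional bound $M_h(x)=M_\mu(\sqrt{x})\ll\sqrt{x}\exp(-c\sqrt{\log x})$, the hyperbola method with $V=\exp(\epsilon\sqrt{\log x})$ yields $M_f(x)\ll\sqrt{x}\exp(-\lambda(\log x)^{1/4})$.
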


Further:

\begin{theorem}\label{theorem 2} Let $f$ be a multiplicative function resembling $\mu$. Let $p$ be a generic prime number. Assume that for some real non-principal Dirichlet character $\chi$ and for some constant $c>0$
\begin{equation}\label{equation strongly pretentious condition}
\sum_{p\leq x}|1-f(p)\chi(p)|\ll \frac{\sqrt{x}}{\exp(c\sqrt{\log x})}.
\end{equation}
Then for some $\lambda>0$
\begin{equation}\label{equation consequence of strongly pretentiousness}
M_f(x)\ll \frac{\sqrt{x}}{\exp(\lambda(\log x )^{1/4})}.
\end{equation}
\end{theorem}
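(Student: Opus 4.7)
The plan is to first decompose $f$ via Dirichlet convolution and then use Perron's formula on the resulting Dirichlet series. I set $g := f * (\mu\chi)$, the Dirichlet convolution of $f$ with the Dirichlet inverse $\mu\chi$ of $\chi$, so that $f = g * \chi$. A direct computation shows that $g$ is multiplicative with $g(p) = f(p) - \chi(p)$, $g(p^2) = -f(p)\chi(p)$, and $g(p^k) = 0$ for $k \geq 3$; in particular $|g(p)| = |1 - f(p)\chi(p)|$, so hypothesis \eqref{equation strongly pretentious condition} translates to $\sum_{p \leq x}|g(p)| \ll \sqrt{x}/\exp(c\sqrt{\log x})$. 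From the identity $M_f(x) = \sum_{d \leq x} g(d) S_\chi(x/d)$ where $S_\chi(y) := \sum_{n \leq y}\chi(n)$, the P\'olya--Vinogradov bound $|S_\chi(y)| = O_\chi(1)$, and a Rankin-type estimate exploiting that $g$ is supported on integers $d = km^2$ with $k$ squarefree composed of primes in $E := \{p : f(p) \neq \chi(p)\}$, one reaches the soft bound $M_f(x) \ll \sqrt{x}$; the real task is to refine this to $\sqrt{x}/\exp(\lambda(\log x)^{1/4})$.

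I would next exploit the Dirichlet series factorization $F(s) = G(s) L(s, \chi)$ together with the further identity $G(s) = H(s)/\zeta(2s)$, where
\[
H(s) = \prod_p \frac{1 + g(p)/p^s + g(p^2)/p^{2s}}{1 - 1/p^{2s}}.
\]
The key observation is that for $p \notin E$ with $p \nmid q$ the relations $g(p) = 0$ and $g(p^2) = -f(p)\chi(p) = -1$ force the Euler factor of $H$ at $p$ to collapse to $1$, so $H(s)$ is genuinely an Euler product over $E \cup \{p : p \mid q\}$. The hypothesis, via partial summation, yields $\sum_{p \in E} p^{-1/2} = O(1)$, and hence $H(s)$ is absolutely convergent and bounded on $\operatorname{Re}(s) \geq 1/2$.

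I would then apply Perron's formula at height $T$ to $F(s) = H(s) L(s, \chi)/\zeta(2s)$ and shift the contour from $\operatorname{Re}(s) = 1 + 1/\log x$ leftwards through $\operatorname{Re}(s) = 1/2$, staying inside the Vinogradov--Korobov zero-free region of $\zeta$ to avoid zeros of $\zeta(2s)$ and using that $L(s, \chi)$ is entire. On the shifted contour the integrand is controlled by: the uniform bound $|H| = O(1)$; the zero-free-region estimate $1/|\zeta(2s)| \ll (\log|t|)^{2/3}(\log\log|t|)^{1/3}$; and convexity (or a mean-square estimate) for $|L(\sigma + it, \chi)|$. The final bound should follow from a careful optimization of the truncation height $T$ together with the amount of the contour shift.

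The hard part will be the last step: a brute-force combination of pointwise convexity for $L(s, \chi)$ with the classical bound for $1/\zeta(2s)$ in the zero-free region is not enough to break the $\sqrt{x}$ barrier. Recovering the saving $\exp(\lambda(\log x)^{1/4})$ appears to require either a mean-square (or subconvex) estimate for $L(s, \chi)$ on vertical lines in place of pointwise convexity, or a quantitative strengthening of the boundedness of $H(s)$---for instance, analytic continuation of $H$ slightly past $\operatorname{Re}(s) = 1/2$ with controlled growth---extracted from the sharp decay $\#\{p \leq x : p \in E\} \ll \sqrt{x}/\exp(c\sqrt{\log x})$ supplied by the hypothesis.
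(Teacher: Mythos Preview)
Your Dirichlet-series factorization $F(s)=H(s)\,L(s,\chi)/\zeta(2s)$, with $H$ an Euler product supported on $E\cup\{p:p\mid q\}$, is correct, but the Perron route you propose does not close, and the obstruction is structural rather than merely technical. Any saving over $\sqrt{x}$ from a contour integral must come from moving the line of integration to the \emph{left} of $\operatorname{Re}(s)=1/2$; yet the hypothesis only yields $\sum_{p\in E}p^{-1/2}<\infty$, and for every fixed $\sigma<1/2$ the bound $\#\{p\in E:p\le x\}\ll\sqrt{x}/\exp(c\sqrt{\log x})$ is perfectly compatible with $\sum_{p\in E}p^{-\sigma}=\infty$, so $H(s)$ need not continue past the half-line. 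Staying on $\operatorname{Re}(s)=1/2$ and invoking mean-value estimates for $L(s,\chi)$ cannot manufacture a saving of the shape $\exp(-\lambda(\log x)^{1/4})$ either, because $|x^{s}|=\sqrt{x}$ identically there. So neither of the two fixes you float can be expected to work in general, and the plan as written stops short of a proof.

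The paper avoids contour integration entirely and argues elementarily in two steps. First it passes to the completely multiplicative $g$ with $g(p)=f(p)$ (so $f=\mu^{2}g$) and shows $M_{g}(x)\ll\sqrt{x}/\exp(\delta\sqrt{\log x})$: setting $h=g*\chi^{-1}$ one checks $|h(p^{k})|=|1-f(p)\chi(p)|$ for every $k\ge1$, and a short Rankin-type argument shows that $\sum_{n}|h(n)|\exp(\delta\sqrt{\log n})/\sqrt{n}<\infty$, whence $M_{g}(x)=\sum_{n\le x}h(n)M_{\chi}(x/n)\ll M_{|h|}(x)$ gives the claim. Second, from $f=\mu^{2}g$ one finds that $f*g^{-1}$ is supported on perfect squares with $(f*g^{-1})(m^{2})=\mu(m)$; the Dirichlet hyperbola method with cut $V=\exp(\epsilon\sqrt{\log x})$, together with the bound on $M_{g}$ just obtained and the classical estimate $M_{\mu}(y)\ll y\exp(-c\sqrt{\log y})$, then yields $M_{f}(x)\ll\sqrt{x}/\exp(\lambda(\log x)^{1/4})$. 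The exponent $(\log x)^{1/4}$ drops out of balancing the two hyperbola sums, and no vertical-line bounds for $L$ or $1/\zeta$ are needed.
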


As Theorem \ref{theorem 2} suggests, our examples of $f$ resembling $\mu$ with small partial sums are related to the real non-principal Dirichlet characters $\chi$. Indeed, the proof of Theorem \ref{theorema 1} is elementary in the following sense: We begin with a real non-principal Dirichlet character $\chi$, extend it to a completely multiplicative function $g:\NN\to\{-1,1\}$ and then we restrict it to the squarefree integers $f=\mu^2g$. The partial sums $M_f(x)$ are $o(x^{2/5+\epsilon})$ under RH, and unconditionally $\ll \frac{\sqrt{x}}{\exp(\lambda (\log x)^{1/4})}$ for some $\lambda>0$.

This raises the question of how small the partial sums $M_f(x)$ can be for $f$ resembling $\mu$ and such that $f=\mu^2 g$, where $g:\NN\to\{-1,1\}$ is a completely multiplicative extension of a real non-principal Dirichlet character $\chi\mod k$, \textit{i.e.}, $g$ is completely multiplicative, $g(n)=\chi(n)$ whenever $\gcd(n,k)=1$ and for each prime $p|k$, $g(p)=\pm1$. It is worth mentioning that the best upper bound we can obtain for $M_f(x)$ for such $f$ seems to be $o(x^{1/4})$, and further the claim $M_f(x)=o(x^{\alpha})$ for some $\alpha<1/2$ is linked with zero free regions for $\zeta$. Indeed, we have the following:
\begin{theorem}\label{theorem 3} Let $f=\mu^2g$ where $g:\NN\to\{-1,1\}$ is a completely multiplicative extension of a real non-principal Dirichlet character $\chi$. Assume that RH holds for the $L$-function $L(s,\chi)$, \textit{i.e.}, all the zeros on the half plane $\mathbb{H}_0:=\{z=\sigma+it\in\CC:\sigma>0\}$ of $L(s,\chi)$ have real part equals to $1/2$. If $M_f(x)$ is $o(x^{\alpha})$ for some $\alpha>0$, then:\\ i. $\alpha\geq 1/4$;\\ 
ii. $\zeta$ has no zeros in the half plane $\HH_{2\alpha}$.
\end{theorem}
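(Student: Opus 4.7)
The strategy is to express the Dirichlet series $F(s)=\sum_{n\geq 1} f(n)n^{-s}$ in terms of $\zeta(2s)$ and $L(s,\chi)$, and then translate the hypothesis $M_f(x)=o(x^\alpha)$ into a holomorphy constraint that forces both conclusions.

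Since $f=\mu^2 g$ is supported on squarefrees, the Euler product is $F(s)=\prod_p(1+g(p)p^{-s})$. Using $g(p)^2=1$, each Euler factor equals $(1-p^{-2s})/(1-g(p)p^{-s})$, yielding the key identity
$$F(s)=\frac{G(s)}{\zeta(2s)},\qquad G(s):=\sum_{n\geq 1}\frac{g(n)}{n^s}=L(s,\chi)\prod_{p\mid k}\frac{1}{1-g(p)p^{-s}}.$$
The finite product over primes dividing the modulus $k$ is holomorphic and non-vanishing in $\sigma>0$ (because $|g(p)|=1<p^\sigma$ there). Combined with the assumed RH for $L(s,\chi)$, this gives that $G$ is holomorphic in $\sigma>0$ and zero-free off the line $\sigma=1/2$. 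By Abel summation, the hypothesis $M_f(x)=o(x^\alpha)$ forces $F(s)$ to be holomorphic in $\HH_\alpha:=\{\sigma>\alpha\}$.

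For (ii), suppose $\zeta(\rho)=0$ with $\operatorname{Re}(\rho)>2\alpha$. Since $\zeta$ has no zeros in $\sigma\geq 1$, also $\operatorname{Re}(\rho)<1$, so $\alpha<\operatorname{Re}(\rho/2)<1/2$. Then $s_0:=\rho/2$ lies in the region where $G$ is holomorphic and non-vanishing, while $\zeta(2s_0)=0$; hence $F(s)=G(s)/\zeta(2s)$ has a pole at $s_0\in\HH_\alpha$, contradicting holomorphy of $F$ there. For (i), Hardy's classical theorem supplies infinitely many zeros of $\zeta$ on the critical line, each producing a zero of $\zeta(2s)$ at a point $s_\gamma=1/4+i\gamma/2$. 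Since $1/4\neq 1/2$, $G(s_\gamma)\neq 0$ by the same structural argument. If $\alpha<1/4$, these $s_\gamma$ all lie in $\HH_\alpha$ and become poles of $F$, contradicting its holomorphy there; therefore $\alpha\geq 1/4$.

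The main (and essentially only) obstacle is verifying that zeros of $\zeta(2s)$ inside $\HH_\alpha$ cannot be cancelled by zeros of $G$. This is resolved structurally: the RH for $L(s,\chi)$ confines zeros of $G$ to the line $\sigma=1/2$, while every relevant point---whether $s_0=\rho/2$ (with $\operatorname{Re}(\rho)<1$ giving $\operatorname{Re}(s_0)<1/2$) or $s_\gamma=1/4+i\gamma/2$ on the line $\sigma=1/4$---has real part strictly less than $1/2$, so no accidental cancellation can occur. Beyond this bookkeeping, the argument requires no delicate analytic estimates, only the two standard inputs (RH for $L(s,\chi)$ and Hardy's theorem).
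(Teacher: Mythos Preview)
Your argument is correct and follows the same route as the paper: identify $F(s)=G(s)/\zeta(2s)$, use $M_f(x)=o(x^\alpha)$ to make $F$ holomorphic in $\HH_\alpha$, and use the RH for $L(s,\chi)$ to confine the zeros of $G$ to $\sigma=1/2$ so that zeros of $\zeta(2s)$ in $\HH_\alpha$ become genuine poles of $F$. Your contradiction argument for (ii)---observing that any hypothetical zero $\rho$ with $\operatorname{Re}\rho>2\alpha$ satisfies $\operatorname{Re}(\rho/2)<1/2$ and hence cannot be cancelled by a zero of $G$---is in fact a slightly cleaner packaging than the paper's phrasing, and your explicit invocation of Hardy's theorem for (i) makes transparent what the paper leaves implicit.
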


\noindent \textbf{Acknowledgements.} I would like to thank Adam Harper for fruitful email exchanges and for suggesting that the first part of Theorem 1.1 could  be obtained by adjusting a real Dirichlet character. I was partially supported by UFMG -- Bolsa Rec\'em Doutor no. 23853.

\section{Proofs of the main results}
\subsection*{Notation} Here $M_f(x):=\sum_{n\leq x}f(n)$. We use both $f(x)\ll g(x)$ and $f(x)=O(g(x))$ whenever there exists a constant $C>0$ such that for all large $x>0$ we have that $|f(x)|\leq C|g(x)|$. Further, $\ll_\delta$ means that the implicit constant may depend on $\delta$. The standard $f(x)=o(g(x))$ means that $\lim_{x\to\infty}\frac{f(x)}{g(x)}=0$. We let $\mathcal{P}$ for the set of primes and $p$ for a generic element of $\mathcal{P}$. The notation $p^k\| n$ means that $k$ is the largest power of $p$ for which $p^k$ divides $n$. The M\"obius function is denoted by $\mu$, \textit{i.e.}, the multiplicative function with support on the square free integers and such that at the primes $\mu(p)=-1$. Dirichlet convolution is denoted by $\ast$. Given a subset $A\subset\NN$, we denote by $\ind_A(n)$ the characteristic function of $A$. Finally, $\omega(k)$ is the number of distinct primes that divide a certain $k$.
\subsection{Proof of Theorem \ref{theorem 2}} We begin with the following
\begin{lemma}\label{lemma 3} Let $h:\NN\to[0,\infty)$ be a multiplicative function such that:\\
i. $h(p)\leq2$ and $h(p^k)\leq h(p)$, for all primes $p$ and all powers $k\geq 2$;\\
ii. For some constant $c>0$, $\sum_{p\leq x}h(p)\ll \frac{\sqrt{x}}{\exp(c\sqrt{\log x})}$.\\
Then there exists a $\delta>0$ such that $M_h(x)\ll \frac{\sqrt{x}}{\exp(\delta\sqrt{\log x})}.$
\end{lemma}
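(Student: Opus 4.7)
The plan is to apply Rankin's upper-bound trick, exploiting that $h\ge 0$. For any $\alpha>0$,
\[
M_h(x) \;\le\; x^{\alpha}\sum_{n\le x}\frac{h(n)}{n^{\alpha}} \;\le\; x^{\alpha}\prod_{p\le x}\left(1+\sum_{k\ge 1}\frac{h(p^k)}{p^{k\alpha}}\right),
\]
the second inequality holding because every $n\le x$ has all its prime factors $\le x$, and expanding the Euler product produces a sum over all such integers. The strategy is to set $\alpha=1/2-\beta$ with $\beta=\beta(x)>0$ tending slowly to $0$, so that the prefactor becomes $x^{\alpha}=\sqrt{x}\exp(-\beta\log x)$ and yields the saving over $\sqrt{x}$, while the truncated Euler product is kept of size $\exp(O(1))$.

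First I would take logarithms, apply $\log(1+y)\le y$, and use hypothesis (i) to obtain
\[
\log\prod_{p\le x}\left(1+\sum_{k\ge 1}\frac{h(p^k)}{p^{k\alpha}}\right) \;\le\; \sum_{p\le x}\frac{h(p)}{p^{\alpha}} + O\!\left(\sum_{p\le x}\frac{h(p)}{p^{2\alpha}}\right),
\]
where the error absorbs the $k\ge 2$ contributions via the geometric estimate $\sum_{k\ge 2}h(p^k)/p^{k\alpha}\ll h(p)/p^{2\alpha}$, valid for $\alpha$ bounded away from $0$ and all but finitely many $p$.

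Next I would estimate both prime sums by partial summation from hypothesis (ii). Writing $S(u):=\sum_{p\le u}h(p)\ll\sqrt{u}\,\exp(-c\sqrt{\log u})$, one has
\[
\sum_{p\le x}\frac{h(p)}{p^{\alpha}} = \frac{S(x)}{x^{\alpha}}+\alpha\int_1^x\frac{S(u)}{u^{\alpha+1}}\,du,
\]
and with the substitution $v=\sqrt{\log u}$ (so $du/u=2v\,dv$) the integral transforms, up to a constant, into $\int_0^{\sqrt{\log x}}2v\exp(\beta v^2-cv)\,dv$, where $\beta=1/2-\alpha$. The key observation is that if $\beta\le c/(2\sqrt{\log x})$, then on the whole interval $\beta v^2-cv\le -cv/2$, so the integral is $O(1)$; analogously $S(x)/x^{\alpha}\le \exp(-c\sqrt{\log x}/2)$. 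The same change of variables (now with a negative coefficient on $v^2$) shows $\sum_{p\le x}h(p)/p^{2\alpha}=O(1)$, since $2\alpha$ remains close to $1$.

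Choosing $\beta=c/(2\sqrt{\log x})$, the truncated Euler product is $\exp(O(1))$ and
\[
M_h(x)\;\ll\;x^{1/2-\beta}\;=\;\frac{\sqrt{x}}{\exp(\beta\log x)}\;=\;\frac{\sqrt{x}}{\exp((c/2)\sqrt{\log x})},
\]
which proves the lemma with $\delta=c/2$ (any smaller positive $\delta$ works too, absorbing the implicit constant). The main delicate step is the calibration of $\beta$: it must be small enough to keep the Euler-product exponent bounded, yet large enough to convert the saving in (ii) into an explicit gain over $\sqrt{x}$; the scale $1/\sqrt{\log x}$ is forced by the $\exp(c\sqrt{\log u})$ shape of the hypothesis.
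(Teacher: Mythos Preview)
Your argument is correct. Both proofs rely on the Euler product and on partial summation over primes against hypothesis (ii), but they are organized differently. The paper fixes a small $\delta<c/2$ and proves outright that the weighted series $\sum_n h(n)\exp(\delta\sqrt{\log n})/\sqrt{n}$ converges, using the sub-additivity $\sqrt{\log n}\le \sum_{p^k\|n}\sqrt{\log p^k}$ to dominate the weight by a multiplicative function and then summing the resulting Euler product at $s=1/2$; the bound for $M_h(x)$ follows by Kronecker's lemma or partial summation. You instead run Rankin's trick with an $x$-dependent exponent $\alpha=1/2-c/(2\sqrt{\log x})$, so the saving over $\sqrt{x}$ comes directly from $x^\alpha$ and the truncated Euler product is shown to be $\exp(O(1))$. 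Your route avoids the sub-additivity trick and is slightly more direct; the paper's route yields the marginally stronger statement that a weighted Dirichlet series converges. Both recover the same exponent $\delta=c/2$.
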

\begin{proof} We are going to show that the series
\begin{equation*}\sum_{n=1}^\infty\frac{h(n)\exp(\delta\sqrt{\log n})}{\sqrt{n}}
\end{equation*}
converges for some small $0<\delta<c/2$, and hence, the proof of the desired result is obtained either by partial summation or by Kroenecker's Lemma (see \cite{shiryaev} pg. 390).

Since $\sqrt{\log n}= \sqrt{\sum_{p^k\|n} \log p^k } \leq \sum_{p^k\| n} \sqrt{ \log p^k}$ we have that
\begin{equation*}\sum_{n\leq x}\frac{h(n)\exp(\delta\sqrt{\log n})}{\sqrt{n}}\leq\sum_{n\leq x}\frac{\tilde{h}(n)}{\sqrt{n}},
\end{equation*}
where $\tilde{h}$ is the multiplicative function such that $\tilde{h}(p^k)=\exp(\delta\sqrt{\log p^k})h(p^k)$, for all primes $p$ and all powers $k$. Hence, by the Euler product formula, we only need to show that the series $\sum_{p\in\mathcal{P}}\sum_{k=1}^\infty\frac{\tilde{h}(p^k)}{p^{k/2}}$ converges.

Let $0<\delta<c/2$ be small such that $\frac{\exp(\delta \sqrt{\log p})}{\sqrt{p}}<1$ for all $p\in\mathcal{P}$. Condition i. combined with the formula for the the sum of a geometric series implies
\begin{equation}\label{equation lema 3}
\sum_{k=2}^\infty\frac{\tilde{h}(p^k)}{p^{k/2}}\leq h(p)\frac{\exp(2\delta \sqrt{\log p})}{p}\frac{1}{1-\frac{\exp(\delta \sqrt{\log p})}{\sqrt{p}}}\ll_{\delta} \frac{h(p)\exp(2\delta \sqrt{\log p})}{\sqrt{p}}.
\end{equation}

Put $T(x)=0$ for $0\leq x <1$ and $T(x)=\sum_{p\leq x}h(p)$ for $x\geq1$. We have that:
\begin{align*}
\sum_{p\leq x} \frac{h(p)\exp(2\delta\sqrt{\log p})}{\sqrt{p}}&=\int_{1}^x \frac{\exp(2\delta\sqrt{\log t})}{\sqrt{t}} dT(t)\\
&\ll T(x)\frac{\exp(2\delta\sqrt{\log x})}{\sqrt{x}}+\int_{1}^x T(t)\frac{\exp(2\delta\sqrt{\log t})}{t^{3/2}}dt\\
&\ll \frac{1}{\exp((c-2\delta)\sqrt{\log x})}+\int_{1}^x \frac{1}{t\exp((c-2\delta)\sqrt{\log t})}dt\\
&\ll 1.
\end{align*}
This estimate combined with (\ref{equation lema 3}) gives that $\sum_{p\in\mathcal{P}}\sum_{k=1}^\infty\frac{\tilde{h}(p^k)}{p^{k/2}}$ converges. \end{proof}

\begin{lemma}\label{lema 4} Let $f:\NN\to\{-1,1\}$ be completely multiplicative. Assume that for some real non-principal Dirichlet character $\chi\mod k$ $f$ satisfies (\ref{equation strongly pretentious condition}). Then for some $\delta>0$, $M_f(x)\ll \frac{\sqrt{x}}{\exp(\delta\sqrt{\log x})}$.
\end{lemma}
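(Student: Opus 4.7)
The plan is to exploit Lemma \ref{lemma 3} after writing $f$ as a Dirichlet convolution involving $\chi$. Since $\chi$ is completely multiplicative, its Dirichlet inverse is $\mu\chi$, so defining the multiplicative function $h:=f\ast(\mu\chi)$ yields the identity $f=\chi\ast h$. The strategy is to verify that $|h|$ satisfies the two hypotheses of Lemma \ref{lemma 3}, extract the resulting decay bound on $\sum_{n\leq x}|h(n)|$, and then transfer this bound to $M_f(x)$ using the boundedness of non-principal character sums.

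First I would compute $h$ on prime powers. Because $\mu(p^j)=0$ for $j\geq 2$, only the terms $j\in\{k-1,k\}$ contribute to
\[
h(p^k)=\sum_{j=0}^{k}f(p^j)\mu(p^{k-j})\chi(p^{k-j})=f(p)^{k}-f(p)^{k-1}\chi(p)=f(p)^{k-1}(f(p)-\chi(p)).
\]
Since $f(p)\in\{\pm 1\}$ and hence $f(p)^2=1$, we obtain
\[
|h(p^k)|=|f(p)-\chi(p)|=|1-f(p)\chi(p)|\leq 2 \qquad\text{for every }k\geq 1,
\]
and in particular $|h(p^k)|=|h(p)|$, which is condition i.\ of Lemma \ref{lemma 3} for the non-negative multiplicative function $|h|$. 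Condition ii.\ is just the hypothesis (\ref{equation strongly pretentious condition}) verbatim, since $\sum_{p\leq x}|h(p)|=\sum_{p\leq x}|1-f(p)\chi(p)|$. Lemma \ref{lemma 3} then supplies a $\delta>0$ with
\[
\sum_{n\leq x}|h(n)|\ll \frac{\sqrt{x}}{\exp(\delta\sqrt{\log x})}.
\]

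To finish, I would interchange the order of summation in $f=\chi\ast h$ to obtain
\[
M_f(x)=\sum_{d\leq x}h(d)\,S_\chi(x/d),\qquad S_\chi(y):=\sum_{m\leq y}\chi(m),
\]
and invoke the standard estimate $|S_\chi(y)|\ll 1$ uniformly in $y$, valid because $\chi$ is non-principal. This gives $|M_f(x)|\ll \sum_{d\leq x}|h(d)|$, and combining with the previous display yields the desired bound. The only subtle point is verifying the equality $|h(p^k)|=|h(p)|$ at primes $p$ dividing the modulus of $\chi$ (where $\chi(p)=0$): there $h(p^k)=f(p)^k$ has absolute value one, which matches $|h(p)|=|f(p)|=1=|1-f(p)\chi(p)|$, so no case is lost. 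I expect no substantive obstacle beyond this bookkeeping and the clean application of Lemma \ref{lemma 3}.
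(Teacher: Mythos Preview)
Your argument is correct and is essentially the same as the paper's: both define $h=f\ast\chi^{-1}$ (you make the inverse explicit as $\mu\chi$), verify $|h(p^k)|=|1-f(p)\chi(p)|$ so that Lemma~\ref{lemma 3} applies to $|h|$, and then bound $M_f(x)=\sum_{n\le x}h(n)M_\chi(x/n)$ using the uniform boundedness of non-principal character sums. The only cosmetic difference is that you spell out the case $p\mid k$ separately, which the paper's computation handles implicitly.
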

\begin{proof} Let $h=f\ast \chi^{-1}$, where $\chi^{-1}$ is the Dirichlet inverse of $\chi$. Then $\chi^{-1}$ is multiplicative and it is supported on the square free integers. It follows that for each prime $p$ and any power $k$:
\begin{align*}
|h(p^k)|&=|f\ast\chi^{-1}(p^k)|=|f(p^k)+f(p^{k-1})\chi^{-1}(p)|=|f(p^k)||1-f(p)\chi(p)|\\
&=|1-f(p)\chi(p)|=|h(p)|.
\end{align*}
Hence $|h|$ satisfies the conditions i-ii of Lemma \ref{lemma 3}. Since
$f=h\ast \chi$, it follows that $M_f(x)=\sum_{n\leq x}h(n) M_{\chi}(x/n)$, and since $\chi$ has (uniformly) bounded partial sums, it follows that $M_f(x)\ll_\chi M_{|h|}(x)$. \end{proof}
We complete the proof of Theorem \ref{theorem 2} with the following

\begin{lemma}\label{lemma 1} Let $g:\NN\to\{-1,1\}$ be completely multiplicative and such that
\begin{equation*}
M_g(x)\ll \frac{\sqrt{x}}{\exp(\delta \sqrt{\log x})}.
\end{equation*}
Let $f=\mu^2 g$. Then $M_f(x)$ satisfies (\ref{equation consequence of strongly pretentiousness}).
\end{lemma}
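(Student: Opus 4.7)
The plan starts from the identity $\mu^{2}(n)=\sum_{d^{2}\mid n}\mu(d)$, which together with the completely multiplicative property of $g$ (and the fact that $g^{2}\equiv 1$, so $g(d^{2}k)=g(k)$) yields $M_{f}(x)=\sum_{d\leq\sqrt{x}}\mu(d)\,M_{g}(x/d^{2})$. Equivalently, this is the convolution identity $f=g\ast\tilde{h}$ read off from the Dirichlet series factorization $\sum_{n}f(n)/n^{s}=\prod_{p}(1+g(p)/p^{s})=G(s)/\zeta(2s)$, where $\tilde{h}(m^{2})=\mu(m)$ and $\tilde{h}$ vanishes on non-squares. A naive triangle-inequality bound on this sum fails badly: for $d$ near $\sqrt{x}$ one has $M_{g}(x/d^{2})=O(1)$ but there are $\asymp\sqrt{x}$ such summands, so one must exploit cancellation in the $\mu(d)$ factors.

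To do so I will apply the Dirichlet hyperbola identity with a parameter $U$, setting $V=x/U$ and $M_{0}=\sqrt{V}$. Writing $M$ for the classical Mertens function,
\begin{equation*}
M_{f}(x)=\sum_{a\leq U}g(a)M(\sqrt{x/a})+\sum_{m\leq M_{0}}\mu(m)M_{g}(x/m^{2})-G(U)M(M_{0}).
\end{equation*}
Each of the three pieces will then be estimated by combining the hypothesis $M_{g}(y)\ll\sqrt{y}/\exp(\delta\sqrt{\log y})$ with the de la Vall\'ee Poussin bound $M(y)\ll y/\exp(c\sqrt{\log y})$. Using the identity $\sqrt{U}\,M_{0}=\sqrt{x}$ in the cross term, and pulling out worst-case factors in each sum, the three contributions come out (up to harmless logarithms) to be of respective orders $\sqrt{xU}/\exp(c\sqrt{\log M_{0}})$, $\sqrt{x}(\log M_{0})/\exp(\delta\sqrt{\log U})$, and $\sqrt{x}/\exp(\delta\sqrt{\log U}+c\sqrt{\log M_{0}})$.

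Finally I will optimize by taking $U=\exp(A\sqrt{\log x})$ for a small constant $A>0$ satisfying $A<c\sqrt{2}$, so that $\log M_{0}\sim(\log x)/2$. The first piece then has savings of order $\sqrt{\log x}\,(c/\sqrt{2}-A/2)$, the cross term enjoys at least the same savings, and the second---which is the binding one---yields $\sqrt{x}\log x/\exp(\delta\sqrt{A}(\log x)^{1/4})$, giving (\ref{equation consequence of strongly pretentiousness}) with any $\lambda$ strictly smaller than $\delta\sqrt{A}$ once the $\log x$ is absorbed into the exponential. The main obstacle, and the reason this argument cannot do better than exponent $1/4$, is precisely that the second piece only sees $\sqrt{\log U}$ while the first piece forces $\log U=O(\sqrt{\log x})$, capping the overall savings at $(\log x)^{1/4}$; any improvement would require a sharper unconditional bound on $M(y)$ (such as Korobov--Vinogradov) or a finer exploitation of cancellation in the second sum.
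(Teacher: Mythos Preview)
Your proposal is correct and is essentially the same argument as the paper's. Both derive $f=g\ast h$ with $h(m^{2})=\mu(m)$ via the Euler product identity $F(s)/G(s)=1/\zeta(2s)$, apply the Dirichlet hyperbola method, and choose the short variable to be of size $\exp(\text{const}\cdot\sqrt{\log x})$; the only difference is that your $U$ and $V$ are the paper's $V$ and $U$ (and your $G(U)$ should read $M_{g}(U)$), so the three terms $A,B,C$ match up after this relabeling and yield the same $(\log x)^{1/4}$ saving.
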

\begin{proof} Let $h:=f\ast g^{-1}$, where $g^{-1}$ is the Dirichlet inverse of $g$. Let $F$, $G$ and $H$ be the associated Dirichlet series of $f$, $g$ and $h$ respectively. The Euler product formula yields
\begin{equation*}
G(s)=\prod_{p\in\mathcal{P}}\bigg{(}1-\frac{g(p)}{p^s}\bigg{)}^{-1},\;F(s)=\prod_{p\in\mathcal{P}}\bigg{(}1+\frac{g(p)}{p^s}\bigg{)}.
\end{equation*}
Since $h=f\ast g^{-1}$:
\begin{equation*}
H(s)=\frac{F(s)}{G(s)}=\prod_{p\in\mathcal{P}}\bigg{(}1-\frac{1}{p^{2s}} \bigg{)}.
\end{equation*}
Thus, $h$ has support on the perfect squares and $h(n)=\ind_{\NN}(\sqrt{n}) \mu(\sqrt{n})$. Further, the Vinogradov-Korobov zero free region for $\zeta$ implies that $M_\mu(x)\ll x \exp(-c\sqrt{\log x})$, for some constant $c>0$. Hence
\begin{equation}\label{equation somas parcias de h}
M_h(x)=M_{\mu}(\sqrt{x})\ll \frac{\sqrt{x}}{\exp(c\sqrt{\log \sqrt{x}})}.
\end{equation}
The Dirichlet hyperbola method yields: For all $U\geq 1$ and $V\geq 1$ such that $UV=x$, we have
\begin{equation}\label{equation dirichlet hyperbola}
M_f(x)=\sum_{n\leq U}h(n)M_g\bigg{(}\frac{x}{n}\bigg{)}+\sum_{n\leq V}g(n)M_h\bigg{(}\frac{x}{n}\bigg{)}-M_g(V)M_h(U):=A+B-C.\\
\end{equation}

We choose $V=\exp(\epsilon(\sqrt{\log x}))$ where $0<\epsilon<\frac{c}{\sqrt{2}}$ and $U=\frac{x}{V}$. Further, $\lambda>0$ is a parameter
$\lambda<\min(\delta\sqrt{\epsilon},\frac{c}{\sqrt{2}}-\epsilon)$.

\noindent  \textit{Estimate for $A$.}
\begin{align*}
|A|&\leq \sum_{n\leq U} \ind_{\NN}(\sqrt{n})|M_g(x/n)|=\sum_{n\leq \sqrt{U}} |M_g(x/n^2)|\\
&\ll\sum_{n\leq \sqrt{U}} \frac{\sqrt{x}}{n}\frac{1}{\exp(\delta\sqrt{\log x/n^2})}\ll \frac{\sqrt{x}\log U}{\exp(\delta\sqrt{\log x/U})}\\
&\ll \frac{\sqrt{x}\exp(\log\log x)}{\exp(\delta\sqrt{\log V})}\ll \frac{\sqrt{x}\exp(\log\log x)}{\exp(\delta\sqrt{\epsilon}(\log x)^{1/4})}\\
&\ll \frac{\sqrt{x}}{\exp(\lambda (\log x)^{1/4})},
\end{align*}
since our $\lambda<\delta\sqrt{\epsilon}$.

\noindent  \textit{Estimate for $B$.} By (\ref{equation somas parcias de h}) we obtain:
\begin{align*}
|B|&\leq \sum_{n\leq V} |M_h(x/n)|\ll \sum_{n\leq V} \sqrt{\frac{x}{n}}\exp\bigg{(}-\frac{c}{\sqrt{2}} \sqrt{\log x/n)} \bigg{)}\\
&\ll \frac{\sqrt{x}}{\exp\bigg{(}\frac{c}{\sqrt{2}} \sqrt{\log x/V} \bigg{)}}\sum_{n\leq V} \frac{1}{\sqrt{n}}\ll \frac{\sqrt{x}}{\exp\bigg{(}\frac{c}{\sqrt{2}} \sqrt{\log x-\log V } \bigg{)}}\cdot  \sqrt{V}\\
&\ll \frac{\sqrt{x}}{\exp\bigg{(}\frac{c}{\sqrt{2}} \sqrt{\log x-\epsilon(\log x )^{1/2}} -\frac{\epsilon}{2}\sqrt{\log x} \bigg{)}}\\
&\ll \frac{\sqrt{x}}{\exp(\lambda(\log x )^{1/4})},
\end{align*}
since our $0<\lambda<\frac{c}{\sqrt{2}}-\frac{\epsilon}{2}$.

\noindent  \textit{Estimate for $C$.}
\begin{align*}
C&\ll\frac{\sqrt{V}}{\exp(\delta \sqrt{\log V})}\frac{\sqrt{U}}{\exp(\frac{c}{\sqrt{2}} \sqrt{\log U})} \ll \frac{\sqrt{UV}}{\exp(\delta \sqrt{\log V}))}\\
&\ll \frac{\sqrt{x}}{\exp(\lambda(\log x)^{1/4})}.
\end{align*}
\end{proof}

\subsection{Proof of Theorem \ref{theorema 1}}
The first part is a consequence from Theorem \ref{theorem 2} proved above. Next we are going to proof the part that depends on RH.

We say that $f:\NN\to\{-1,+1\}$ is a completely multiplicative extension of a real character $\chi\mod k$ if $f$ is completely multiplicative and $f(n)=\chi(n)$ whenever $\gcd(n,k)=1$. The following result is closely related to corollary 4 and 5 of \cite{Borweindicrepancy}:

\begin{lemma}\label{lemma 2}
Let $g:\NN\to\{-1,1\}$ be the completely multiplicative extension of a real non-principal Dirichlet character $\chi\mod k$ such that:
\begin{align*}
g(n)&= \chi(n),\mbox{ if } \gcd(n,k)=1,\\
g(p)&=1, \mbox{ for each prime }p|k.
\end{align*}
Then:
\begin{equation*}
\limsup_{x\to\infty} \frac{|M_g(x)|}{(\log x)^{\omega(k)}}\leq \frac{\max_{y\geq 1} |M_\chi(y)|}{\omega(k)!}\prod_{p|k}\frac{1}{\log p}.
\end{equation*}
\end{lemma}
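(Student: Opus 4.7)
The plan is to reduce the sum $M_g(x)$ to a sum of $M_\chi$-values indexed by $k$-smooth integers, and then to count such smooth integers via a standard lattice-point estimate.

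First I would exploit the hypothesis on $g$ to factor each $n$ as $n = md$, where $\gcd(m,k)=1$ and $d$ is supported on the primes dividing $k$. Since $g$ is completely multiplicative with $g(p)=1$ for every $p\mid k$, one has $g(d)=1$, and since $g$ agrees with $\chi$ on integers coprime to $k$, $g(n)=\chi(m)$. Using that $\chi(m)=0$ when $\gcd(m,k)>1$, this gives
\begin{equation*}
M_g(x)=\sum_{\substack{d\leq x\\ p\mid d\,\Rightarrow\, p\mid k}} \sum_{m\leq x/d}\chi(m) = \sum_{\substack{d\leq x\\ p\mid d\,\Rightarrow\, p\mid k}} M_\chi(x/d).
\end{equation*}

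Next, setting $B:=\max_{y\geq 1}|M_\chi(y)|$ (which is finite since $\chi$ is non-principal and has bounded partial sums), I would bound trivially
\begin{equation*}
|M_g(x)|\leq B \cdot N_k(x),\qquad N_k(x):=\#\{d\leq x : p\mid d\Rightarrow p\mid k\}.
\end{equation*}
Writing $k=p_1^{e_1}\cdots p_r^{e_r}$ with $r=\omega(k)$, the quantity $N_k(x)$ counts the $(a_1,\ldots,a_r)\in\NN_{\geq 0}^{r}$ with $\sum a_i\log p_i\leq \log x$, i.e., lattice points in the simplex $S(L):=\{y\in[0,\infty)^r:\sum y_i\log p_i\leq L\}$ with $L=\log x$. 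A standard argument (comparing the lattice-point count to the volume by sandwiching $S(L)$ between $S(L\pm C)$ for a suitable $C$ depending only on $p_1,\ldots,p_r$) yields
\begin{equation*}
N_k(x)=\frac{(\log x)^{\omega(k)}}{\omega(k)!\,\prod_{p\mid k}\log p}\bigl(1+o(1)\bigr),\qquad x\to\infty.
\end{equation*}

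Combining the two displays gives exactly
\begin{equation*}
\limsup_{x\to\infty}\frac{|M_g(x)|}{(\log x)^{\omega(k)}} \leq \frac{B}{\omega(k)!}\prod_{p\mid k}\frac{1}{\log p},
\end{equation*}
which is the claim. The only non-routine point is the volume/lattice comparison for $N_k(x)$, but this is a classical smooth-number estimate (it is Mertens-type for $r=1$ and follows by induction on $r$), so no serious obstacle is expected; the main conceptual step is the factorisation $n=md$ that converts the partial sum of $g$ into a sum over $k$-smooth dilations of partial sums of $\chi$.
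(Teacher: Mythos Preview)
Your argument is correct and structurally identical to the paper's: both reduce to the identity $M_g(x)=\sum_{d\leq x,\;p\mid d\Rightarrow p\mid k} M_\chi(x/d)$ and then bound by $B\cdot N_k(x)$, where $N_k(x)$ counts the $k$-smooth integers up to $x$. In the paper this identity is packaged via the Dirichlet convolution $h:=g\ast\chi^{-1}$, and one checks that $h$ is precisely the indicator of the $k$-smooth integers, so $M_h=N_k$; your direct factorisation $n=md$ reaches the same point without the convolution formalism.

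The only genuine difference is how the asymptotic $N_k(x)\sim\frac{(\log x)^{\omega(k)}}{\omega(k)!\prod_{p\mid k}\log p}$ is obtained. The paper invokes the Hardy--Littlewood--Karamata Tauberian theorem, applied to $\sum \tilde h(n)/n^s=H(s-1)=\prod_{p\mid k}(1-p^{1-s})^{-1}$, which has a pole of order $\omega(k)$ at $s=1$. Your lattice-point comparison with the simplex volume is more elementary and self-contained, avoiding the Tauberian machinery; the paper's route, on the other hand, makes the connection to the Dirichlet series $H(s)$ explicit, which fits the analytic flavour of the surrounding arguments. Either method is standard and adequate here.
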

\begin{proof}
Let $g$ be as above and $h=g\ast \chi^{-1}$. Let $G$, $H$ and $L$ be the Dirichlet series of $g$, $h$ and $\chi$ respectively. Observe that
\begin{equation*}
G(s)=H(s)L(s)=L(s)\prod_{p|k}\frac{1}{1-\frac{1}{p^s}}.
\end{equation*}
Let $\tilde{h}(n)=n h(n)$. Observe that $\sum_{n=1}^\infty\frac{\tilde{h}(n)}{n^{s}}=H(s-1)$ converges for all $s=\sigma+it$ with $\sigma>1$. Further, $H(s-1)$ has pole at $s=1$ of order $\omega(k)$, since
\begin{equation*}
1-\frac{1}{p^s}\sim s\log p.
\end{equation*}
In particular
\begin{equation*}
\sum_{n=1}^\infty\frac{\tilde{h}(n)}{n^s}\sim \frac{1}{(s-1)^{\omega(k)}}\prod_{p|k}\frac{1}{\log p}.
\end{equation*}
Further, $\tilde{h}(n)\geq 0$. By the Theorem of Hardy-Littlewood-Karamata (see \cite{tenenbaumlivro}, Theorem 8, pg. 227) we obtain that
\begin{equation*}
M_h(x)=\sum_{n\leq x}\frac{\tilde{h}(n)}{n}\sim \frac{1}{\omega(k)!}\prod_{p|k}\frac{\log x}{\log p}.
\end{equation*}
Since $g=h\ast\chi$, we have:
\begin{equation*}
M_g(x)=\sum_{n\leq x}h(n)M_\chi \abrep\frac{x}{n} \fechap.
\end{equation*}
Thus $|M_g(x)|\leq (\max_{y\geq 1}|M_\chi(y)|)M_h(x)$ completes the proof. \end{proof}
\begin{proof}[Proof of Theorem \ref{theorema 1}]
Let $g$ be as in Lemma \ref{lemma 2}. In particular $M_g(x)\ll x^{\alpha}$ for any $\alpha>0$. Let $f=\mu^2 g$ and $h=f\ast g^{-1}$. Thus, as in the proof of Lemma \ref{lemma 1}, $h(n)=\ind_\NN(\sqrt{n})\mu(\sqrt{n})$. Under RH, we have for any $\epsilon>0$:
\begin{equation*}
M_{h}(x)\ll x^{1/4+\epsilon}.
\end{equation*}
Next, we proceed with the Dirichlet Hyperbola method in the same line of reasoning of the proof of Lemma \ref{lemma 1}.  Let $A,B$ and $C$ be defined as in (\ref{equation dirichlet hyperbola}); $V=x^{1/5}$ and $U=x^{4/5}$. It is worth mentioning that these choices for $U$ and $V$ are optimal.

\noindent  \textit{Estimate for $A$.}
\begin{align*}
A\ll x^\alpha U^{\frac{1}{2}-\alpha}\ll x^{\alpha}x^{4/5(1/2-\alpha)}
\ll x^{2/5+\alpha/5}.
\end{align*}

\noindent  \textit{Estimate for $B$.}
\begin{align*}
B\ll \sum_{n\leq V} \frac{x^{1/4+\epsilon}}{n^{1/4+\epsilon}}
\ll x^{1/4+\epsilon}V^{3/4-\epsilon}
\ll x^{1/4+\epsilon}x^{1/5(3/4-\epsilon)}
\ll x^{2/5+4/5\epsilon}.
\end{align*}
\noindent  \textit{Estimate for $C$.}
\begin{align*}
C\ll V^\alpha U^{1/4+\epsilon}= x^{\alpha/5 +4/5(1/4+\epsilon)}=x^{1/5+\alpha/5+4\epsilon/5}
\end{align*}
We complete the proof by choosing $\alpha>0$ and $\epsilon>0$ arbitrarily small. \end{proof}

\subsection{Proof of Theorem \ref{theorem 3}}
\begin{proof} Let $\chi$ be a real non-principal Dirichlet character and $L(s,\chi)$ be its Dirichlet series. Assume that RH holds for $L(s,\chi)$. Let $g:\NN\to\{-1,1\}$ be a completely multiplicative extension of $\chi$ and $f=\mu^2 g$. Let $F(s)$ and $G(s)$ be the Dirichlet series of $f$ and $g$ respectively. It follows that $G(s)$ is analytic in the half plane $\HH_0$ and share same zeros (with same multiplicty) with $L(s,\chi)$. On the other hand, the hypothesis $M_f(x)=o(x^\alpha)$ implies that $F$ is analytic in $\HH_\alpha$. Observe that $\frac{F(s)}{G(s)}=\frac{1}{\zeta(2s)}$. Since $\frac{1}{\zeta(2s)}$ is analytic in some open set containing the closed half plane $\HH_{1/2}^*$ and has a zero only at $s=1/2$, it follows that $F$ has the same zeros as $G$ (with the same multiplicity, with a possible exception at $s=1/2$) in this open set. Hence the zeros of $\zeta(2s)$ are poles for $F(s)$, which implies that $\alpha\geq 1/4$. Further, the RH for $L(s,\chi)$ implies that $\frac{F(s)}{G(s)}$ is analytic where $F$ is; In particular it is analytic in $\HH_\alpha$. It follows that $\frac{1}{\zeta(2s)}$ is analytic in $\HH_\alpha$. \end{proof}

{\small{\sc \noindent Marco Aymone \\
Departamento de Matem\'atica, Universidade Federal de Minas Gerais, Av. Ant\^onio Carlos, 6627, CEP 31270-901, Belo Horizonte, MG, Brazil.} \\
\textit{Email address:} marco@mat.ufmg.br}
\vspace{0.5cm}


\begin{thebibliography}{10}

\bibitem{Borweindicrepancy}
{\sc P.~Borwein, S.~K.~K. Choi, and M.~Coons}, {\em Completely multiplicative
  functions taking values in {$\{-1,1\}$}}, Trans. Amer. Math. Soc., 362
  (2010), pp.~6279--6291.

\bibitem{harpergaussian}
{\sc A.~J. Harper}, {\em Bounds on the suprema of {G}aussian processes, and
  omega results for the sum of a random multiplicative function}, Ann. Appl.
  Probab., 23 (2013), pp.~584--616.

\bibitem{Hilberdinkthegroupsquarefree}
{\sc T.~Hilberdink}, {\em The group of squarefree integers}, Linear Algebra
  Appl., 457 (2014), pp.~383--399.

\bibitem{klurmancorrelation}
{\sc O.~Klurman}, {\em Correlations of multiplicative functions and
  applications}, Compos. Math., 153 (2017), pp.~1622--1657.

\bibitem{klurmanchudakov}
{\sc O.~Klurman and A.~P. Mangerel}, {\em Rigidity theorems for multiplicative
  functions}, Math. Ann., 372 (2018), pp.~651--697.

\bibitem{tenenbaum2013}
{\sc Y.-K. Lau, G.~Tenenbaum, and J.~Wu}, {\em On mean values of random
  multiplicative functions}, Proc. Amer. Math. Soc., 141 (2013), pp.~409--420.

\bibitem{Maierresembling}
{\sc H.~Maier and A.~Sankaranarayanan}, {\em On multiplicative functions
  resembling the {M}\"obius function}, J. Number Theory, 163 (2016),
  pp.~75--88.

\bibitem{shiryaev}
{\sc A.~N. Shiryaev}, {\em Probability}, vol.~95 of Graduate Texts in
  Mathematics, Springer-Verlag, New York, second~ed., 1996.
\newblock Translated from the first (1980) Russian edition by R. P. Boas.

\bibitem{taodiscrepancy}
{\sc T.~Tao}, {\em The {E}rd\H{o}s discrepancy problem}, Discrete Anal.,
  (2016), pp.~Paper No. 1, 29.

\bibitem{tenenbaumlivro}
{\sc G.~Tenenbaum}, {\em Introduction to analytic and probabilistic number
  theory}, vol.~46 of Cambridge Studies in Advanced Mathematics, Cambridge
  University Press, Cambridge, 1995.
\newblock Translated from the second French edition (1995) by C. B. Thomas.

\bibitem{wintner}
{\sc A.~Wintner}, {\em Random factorizations and {R}iemann's hypothesis}, Duke
  Math. J., 11 (1944), pp.~267--275.

\end{thebibliography}
\end{document}